\newtheorem{theorem}{Theorem}[section]
\newtheorem{proposition}[theorem]{Proposition}
\newtheorem{lemma}[theorem]{Lemma}
\newtheorem{claim}[theorem]{Claim}
\newtheorem{observation}[theorem]{Observation}
\theoremstyle{definition}
\newtheorem{definition}[theorem]{Definition}
\newcommand{\U}{\mathcal U}
\newcommand{\w}{\omega}
\newcommand{\IP}{\mathbb P}
\newcommand{\F}{\mathcal{F}}
\newcommand{\V}{\mathcal{V}}
\newcommand{\bigvid}{\hat{\ \ }}
\newcommand{\uhr}{\upharpoonright}
\newcommand{\name}[1]{\dot{#1}}
\newcommand{\la}{\langle}
\newcommand{\ra}{\rangle}
\newcommand{\forces}{\Vdash}
\newcommand{\hot}{\mathfrak}
\newcommand{\nothing}[1]{}
\title[Productively Menger versus productively Hurewicz]{On the
interplay between productively Menger and productively Hurewicz spaces in models of $\hot b=\hot d$}
\author{Du\v{s}an D. Repov\v{s}, Lyubomyr Zdomskyy}
\address{Faculty of Education and Faculty of Mathematics and Physics, University of Ljubljana, \& Institute of Mathematics, Physics and Mechanics,
1000 Ljubljana, Slovenia.}
\email{dusan.repovs@guest.arnes.si}
\urladdr{http://repovs.splet.arnes.si/}
\address{Institut f\"ur Diskrete Mathematik und Geometrie,
Technische Universit\"at Wien,
Wiedner Hauptstrasse 8—10/104, 1040 Vienna, Austria}
\email{lzdomsky@gmail.com}
\urladdr{http://www.dmg.tuwien.ac.at/zdomskyy/}
\subjclass[2020]{Primary: 03E35, 54D20. Secondary: 03E05, 03E17.}
\keywords{Menger space, Hurewicz space,
 Laver forcing, CH, preservation by products.}
\thanks{The first author 
acknowledges the support from the Slovenian Research and Innovation Agency grants 
P1-0292, J1-4031, J1-4001, N1-0278, N1-0114, and N1-0083.
The second  author would
like to thank  the Austrian Science Fund FWF (Grant I 5930)
 for generous support of this research.}
\begin{document}
\maketitle

\begin{abstract}
This article is devoted to the interplay between productively Menger and productively Hurewicz subspaces of the Cantor space.
In particular, we show that 
in the Laver model
 for the consistency of the Borel's conjecture 
 these two notions coincide and characterize Hurewicz spaces.
 On the other hand, it is consistent with  CH that there are productively Hurewicz
 subspaces of the Cantor space which are not productively Menger.
\end{abstract}

\section{Introduction}
This work may be thought of as a continuation of our earlier paper
\cite{RepZdo17}, so we keep our introduction here short and refer the 
reader to that of \cite{RepZdo17}. Except for Theorem~\ref{main_laver}
and its proof at the end of Section~\ref{sec_laver}, we consider 
only zero-dimensional metrizable separable spaces, i.e., subspaces of
the Cantor space $2^\w$ up to a homeomorphism. 

A topological space
$X$ has the  \emph{Menger} property (or, alternatively, is a Menger space)
 if for every sequence $\la \U_n : n\in\omega\ra$
of open covers of $X$ there exists a sequence $\la \V_n : n\in\omega \ra$ such that
each $\V_n$ is a finite subfamily of $\U_n$ and the collection $\{\cup \V_n:n\in\omega\}$
is a cover of $X$. 
We get an equivalent property if we demand that each
$x\in X$ is covered by $\cup\V_n$ for infinitely many $n\in\w$ 
(for this it suffices to split
$\la \U_n : n\in\omega\ra$ into infinitely many mutually disjoint subsequences and
apply the Menger property to each of them).
If in the definition above we additionally require that $\{\cup\V_n:n\in\w\}$
is a \emph{$\gamma$-cover} of $X$
(this means that the set $\{n\in\w:x\not\in\cup\V_n\}$ is finite for each $x\in X$),
then we obtain the definition of the \emph{Hurewicz  property}  introduced
in \cite{Hur27}.  
However, we shall actually use the following characterizations
of these  properties established in \cite{Hur27} (see also
\cite[Theorems~4.3 and 4.4]{COC2}):
$X\subset 2^\w$ is Menger (resp. Hurewicz) if and only if for every continuous 
$f:X\to\w^\w$, the range $f[X]$ is non-dominating (resp. bounded) with respect to the eventual dominance relation $\leq^*$.

One of the basic questions about a topological property is whether
it is preserved by finite products, which led to the following 
definitions introduced in \cite{MilTsaZdo14}: A topological space
$X$ is \emph{productively Hurewicz} (resp. \emph{productively Menger}),
if $X\times Y$ is Hurewicz (resp. Menger) for all Hurewicz
(resp. Menger) spaces $Y$. Since singletons are Hurewicz, productively 
Hurewicz (resp. productively Menger) spaces are Hurewicz (resp. Menger).
If $\hot b<\hot g$, there are productively Menger spaces which are not even Hurewicz, see the discussion at the beginning of
\cite[p.~10]{SzeTsa17}. However, if $\hot b=\hot d$, then
productively Menger spaces are productively Hurewicz by \cite[Theorem~4.8]{SzeTsa17}. 

In this paper 
we show that the statement 
``$\hot b=\hot d$ and classes of productively Hurewicz and productively Menger spaces coincide'' is independent from ZFC. More precisely, in one direction  we use the key  lemma of \cite{Lav76} in the style of \cite{MilTsa10} and prove the following

\begin{theorem} \label{main_laver}
In the Laver model for the consistency of the Borel's conjecture,
  Hurewicz spaces are productively Menger (and hence also productively Hurewicz) in the realm of 
subspaces of $2^\w$. 

Consequently, the product $X\times Y$ of a Hurewicz space $X$ and a
Menger space $Y$ is Menger if it is Lindel\"of.
\end{theorem}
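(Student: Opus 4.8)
The plan is to argue entirely through the characterizations of the Menger and Hurewicz properties recalled in the excerpt. Fix a Hurewicz $X\sbst 2^\w$ and a Menger $Y\sbst 2^\w$; viewing $X\times Y$ as a subspace of $2^\w\times 2^\w\cong 2^\w$, we must show that every continuous $f\colon X\times Y\to\w^\w$ has non-dominating range. With $f_x:=f(x,\cdot)\colon Y\to\w^\w$ and $f^y:=f(\cdot,y)\colon X\to\w^\w$ we have
\[ f[X\times Y]=\bigcup_{x\in X}f_x[Y]=\bigcup_{y\in Y}f^y[X], \]
where each $f_x[Y]$ is non-dominating (as $Y$ is Menger) and each $f^y[X]$ is bounded (as $X$ is Hurewicz). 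The elementary point is that the non-dominating subsets of $\w^\w$, and likewise the bounded ones, form families closed under unions of fewer than $\bb$ members: a $\le^*$-bound of $<\bb$ many witnessing functions witnesses the union. Hence $X\times Y$ is Menger whenever $|X|<\bb$, and even Hurewicz whenever $|Y|<\bb$. In the Laver model $\bb=\dd=\cc=\aleph_2$, so this settles every case with $\min(|X|,|Y|)\le\aleph_1$, leaving only $|X|=|Y|=\aleph_2$.

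For that case, transfer the problem to the ground model: by the Lavrentiev--Kuratowski theorem $f$ extends to a continuous map $\bar f$ on a $G_\delta$ set $G\spst X\times Y$ in $2^\w\times 2^\w$, and a code for $(\bar f,G)$ is a real, hence lies in $V_{\alpha_0}$ for some $\alpha_0<\w_2$. A tail of a countable support iteration of Laver forcing is again such an iteration over the intermediate model, and $V_{\alpha_0}$ satisfies CH, so we may assume that $V=V_0$ satisfies CH, that $V_{\w_2}$ is the Laver model over $V$, and that $\bar f$ and $G$ are coded in $V$. Now the crux, where the key combinatorial lemma of \cite{Lav76} is used in the style of \cite{MilTsa10} (a fusion analysis of names along the iteration): one establishes a structural fact of the shape \emph{in the Laver model every Hurewicz $X\sbst 2^\w$ is the union of a $\sigma$-compact set $H$ and a set $S$ with $|S|\le\aleph_1$}. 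The heuristic is that, since each Laver real dominates the reals of the model it is added over, a continuous image of $X$ genuinely depending on arbitrarily late generics would be unbounded, contradicting Hurewiczness; so, apart from an $\aleph_1$-sized remainder, $X$ is trapped inside $\sigma$-compact sets already coded at intermediate stages. One also invokes here the $G_\delta$-form of the Hurewicz property --- $X\sbst 2^\w$ is Hurewicz iff every $G_\delta$ set containing $X$ contains a $\sigma$-compact superset of $X$.

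Granting the structural fact, write $X=H\cup S$ with $H=\bigcup_n K_n$, each $K_n$ compact, and $|S|\le\aleph_1$. Then $X\times Y=\bigcup_n(K_n\times Y)\cup(S\times Y)$: each $K_n\times Y$ is Menger, being the product of a compact space with a Menger space, and $S\times Y$ is Menger by the first paragraph since $|S|\le\aleph_1<\bb$. A countable union of Menger spaces is Menger (split the given sequence of open covers into infinitely many subsequences and apply the ``infinitely often'' form of the Menger property to each summand), so $X\times Y$ is Menger. Thus in the Laver model every Hurewicz subspace of $2^\w$ is productively Menger; productive Hurewiczness then follows from \cite[Theorem~4.8]{SzeTsa17}, since $\bb=\dd$ holds.

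For the last clause, let $X$ be Hurewicz, $Y$ Menger, $X\times Y$ Lindel\"of, and $g\colon X\times Y\to\w^\w$ continuous. Since $X\times Y$ is Lindel\"of, each clopen set $g^{-1}(\{h:h(n)=k\})$ is a countable union of open rectangles, whence $g$ factors as $g'\circ(\pi_X\times\pi_Y)$ through continuous surjections onto second-countable spaces $\pi_X\colon X\to X'$ and $\pi_Y\colon Y\to Y'$; these are Hurewicz resp. Menger, and after passing to zero-dimensional metrizable versions we may assume $X',Y'\sbst 2^\w$, so the previous paragraph gives $X'\times Y'$ Menger and hence $g[X\times Y]=g'[X'\times Y']$ non-dominating. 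As $X\times Y$ is Lindel\"of, it is therefore Menger. The main obstacle is the structural fact of the second paragraph: extracting from Laver's lemma a uniform $\sigma$-compact trapping of $X$ modulo a set of size $\le\aleph_1$, coherently along the whole length-$\w_2$ iteration rather than at a single step, is the real technical content; by contrast the reductions in the other paragraphs --- the $\bb$-additivity of boundedness and of non-domination, the transfer to the ground model, the treatment of $\sigma$-compact factors, and the descent from Lindel\"of spaces to subspaces of $2^\w$ --- are routine.
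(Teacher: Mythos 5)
Your reduction to the ``structural fact'' is where the argument breaks down, and unfortunately that fact is not merely unproved but false in the Laver model, so the gap cannot be filled along these lines. By the main result of \cite{MilTsa10} (the very paper whose fusion technique you allude to), in the Laver model every $\bb$-scale set $X=\{x_\alpha:\alpha<\w_2\}\cup[\w]^{<\w}$, where $\{x_\alpha\}\sbst[\w]^\w$ is $\le^*$-increasing and unbounded (recall $\bb=\w_2$ there), satisfies $S_1(\Gamma,\Gamma)$ and in particular is Hurewicz. Such an $X$ cannot be written as $\bigcup_nK_n\cup S$ with each $K_n$ compact and $|S|\le\aleph_1$: otherwise some $K_n$ would contain $\aleph_2$ many of the $x_\alpha$'s, hence, being an uncountable compact metrizable space meeting the countable set $[\w]^{<\w}$ in only countably many points, it would contain a Cantor set $C\sbst\{x_\alpha:\alpha<\w_2\}$; the increasing-enumeration map is continuous on $[\w]^\w$, so it sends $C$ to a compact, hence $\leq^*$-bounded, subset of $\w^\w$ of size $\cc=\aleph_2$ whose index set is cofinal in $\w_2$ and therefore bounds the whole scale, contradicting unboundedness. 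So no ``$\sigma$-compact plus $\aleph_1$-sized'' decomposition of Hurewicz sets is available, and the heuristic about Laver reals dominating intermediate models cannot be upgraded to that statement.

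What the paper proves instead is a much weaker, product-tailored covering property $(\dagger)$: for every assignment $Q\mapsto M(Q)$ of Menger sets disjoint from countable $Q\sbst X$, some $\aleph_1$ many $Q$'s satisfy $X\sbst\bigcup_{Q}(2^\w\sm M(Q))$. The Laver fusion (Lemma~\ref{laver}) does produce compact sets $K_m$ and a countable $Q_*=\bigcup_mK_m\sbst X$, but only relative to a fixed name for a single point of $X$ and a fixed condition; it yields no global $\sigma$-compact trapping of $X$. Property $(\dagger)$ together with $\bb>\w_1$ then gives productive Mengerness by diagonalizing over the $\aleph_1$ many finite-subcover patterns (Lemma~\ref{main1}). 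Your opening paragraph (unions of fewer than $\bb$ bounded or non-dominating sets) and the ``countable union of Menger spaces is Menger'' step are fine, and your Lindel\"of reduction is in the right spirit, though the paper routes it through the semifilter characterization of \cite{Zdo05} precisely to sidestep the regularity and zero-dimensionality issues in factoring through second-countable images; but without a correct replacement for the structural fact the core of the theorem is missing.
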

The theorem above is an improvement of \cite[Theorem~1.1]{Zdo20}, as
follows from \cite[Theorem~4.8]{SzeTsa17}.

The next result, whose proof relies on ideas from \cite{SzeTsa17},
shows that the conclusion of  Theorem~\ref{main_laver} is not 
a consequence of CH, so also not of $\hot b=\hot d$, the latter being 
the assumption in \cite[Theorem~4.8]{SzeTsa17}.

\begin{theorem}\label{main_ch}
The existence of a productively Hurewicz space which is not productively Menger is consistent with CH.
\end{theorem}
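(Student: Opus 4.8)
The plan is to build, under CH, a subspace $X$ of the Cantor space which is productively Hurewicz but not productively Menger, imitating the construction in \cite[Theorem~4.8]{SzeTsa17} (and its surrounding machinery) but reversing the roles of the two properties. The starting point is the characterization recalled in the introduction: $X$ is Hurewicz (resp. Menger) iff every continuous image of $X$ in $\w^\w$ is bounded (resp. non-dominating). To make $X$ fail to be productively Menger it suffices to exhibit a single Menger space $Y$ with $X\times Y$ non-Menger; the natural candidate is a carefully chosen unbounded (hence non-dominating when scaled correctly) set $Y\sbst\w^\w$ together with a continuous map on $X\times Y$ whose range dominates $\w^\w$. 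To make $X$ productively Hurewicz one must ensure that $X\times Y$ is Hurewicz for \emph{every} Hurewicz $Y$; since Hurewicz spaces are in particular bounded in the above sense, one exploits the fact that under $\hot b=\hot d=\cc$ (which follows from CH) there is enough room to dovetail through all Hurewicz-many ``obstructions'' by a transfinite recursion of length $\cc$.

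The key steps, in order, are as follows. First, fix a bookkeeping enumeration of length $\w_1$ of all pairs $(Z,g)$ where $Z$ is a candidate Hurewicz space (coded, say, as a $\sigma$-compact-dominated subset of $2^\w$, using the Hurewicz characterization to reduce to cofinally many ``scales'') and $g$ is a continuous function on (a basic product neighbourhood inside) $X\times Z$ into $\w^\w$; simultaneously reserve cofinally many stages to diagonalize against Menger-ness of $X\times Y$ for the fixed bad partner $Y$. Second, construct $X=\{x_\alpha:\alpha<\w_1\}$ by recursion: at stage $\alpha$ one has a countable partial piece $X_\alpha$, and one chooses $x_\alpha\in 2^\w$ so that (a) for the currently handled pair $(Z,g)$, the $g$-images stay below a single slalom/bound that has already been committed to — here one uses that Hurewicz-ness of $Z$ gives a dominating compact set of branches, against which one can preselect the bound before choosing $x_\alpha$, exactly as in the $\hot b=\hot d$ argument of \cite{SzeTsa17}; and (b) the designated continuous map witnessing non-Menger-ness of $X\times Y$ gets one more point of its range pushed above the $\alpha$-th element of a fixed dominating family of $\w^\w$. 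Third, verify at the end that every continuous image of $X\times Z$ (for $Z$ Hurewicz) is bounded — this is where the bookkeeping must have caught every relevant $(Z,g)$, using a reflection/absoluteness argument to see that a continuous function on the full product is determined by its restrictions to countably many coordinates and hence appears at some stage — so $X$ is productively Hurewicz; and that the reserved diagonalization makes the image of $X\times Y$ under the designated map dominating, so $X\times Y$ is not Menger, whence $X$ is not productively Menger.

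The main obstacle, as in \cite{SzeTsa17}, will be step (a): reconciling the ``for all Hurewicz $Y$'' quantifier with a single recursion of length $\w_1$. A continuous function on $X\times Z$ is not literally coded by a real, and $Z$ ranges over arbitrary Hurewicz subspaces of $2^\w$, of which there are $2^{\cc}$ many; the trick (which I would borrow wholesale from the cited proof) is that Hurewicz-ness of $Z$ lets one replace $Z$ by a $\sigma$-compact superset $Z'\spst Z$ on which the relevant continuous map is still defined and still has bounded image, and $\sigma$-compact subsets of $2^\w$ — equivalently, $F_\sigma$ sets — \emph{are} coded by reals, so there are only $\cc=\w_1$ of them. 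One must be careful that the map $g$ extends continuously to $X\times Z'$, or, more honestly, that it suffices to control $g$ on $X\times Z$ and that $Z\sbst Z'$ lets one bound via a compact scale; making this reduction precise, and checking that the bound can genuinely be fixed \emph{before} $x_\alpha$ is chosen (so that the recursion does not run into a circularity), is the delicate point. The non-Menger side is comparatively routine: one just needs the fixed partner $Y$ to be Menger (e.g.\ a scale-bounded but unbounded set, or a Luzin-type set in $\w^\w$), and then a single continuous surjection-like map onto a dominating family, engineered coordinate by coordinate during the recursion, finishes it.
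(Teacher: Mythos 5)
There is a genuine gap, and it is located exactly where you flag ``the delicate point.'' Your plan rests on reducing the quantifier ``for all Hurewicz $Z$'' to an $\w_1$-enumeration of real-coded obstructions by replacing $Z$ with a $\sigma$-compact superset $Z'$. This reduction is not available: there are $2^{\cc}$ Hurewicz subspaces of $2^\w$, a continuous $g$ on $X\times Z$ need not extend to $X\times Z'$, and --- worse --- the partner $Z$ may itself be entangled with the part of $X$ not yet constructed at the stage where the bookkeeping is supposed to handle it. You name this circularity but do not resolve it, and no known argument resolves it under CH alone. Indeed, the paper explicitly states that it is \emph{not known} whether the conclusion follows from CH; a successful version of your recursion would settle that open question. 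The paper sidesteps the entire issue by taking $X$ to be an unbounded tower together with $[\w]^{<\w}$ and quoting \cite[Theorem~6.5(1)]{MilTsaZdo14}, which asserts outright that such spaces are productively Hurewicz --- no recursion over partners is needed.

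The non-Menger side is also not ``comparatively routine,'' and this is where the consistency (rather than CH-) proof really lives. The paper interleaves two towers: starting from a tower $Y=\{y_\alpha\}$ in a ground model of CH, it chooses $x_\alpha$ with $y_{\alpha+1}\sbst x_\alpha\sbst y_\alpha$ and $z_\alpha\leq^* y_\alpha\setminus x_\alpha$ for an enumeration $\{z_\alpha\}$ of $[\w]^\w$, so that $X\oplus Y$ is a dominating subset of $[\w]^\w$ and hence $X\times Y$ is not Menger. The partner $Y$ is then made Menger not by CH but by forcing with $\mathit{Fn}(\w_1,2)$ (which preserves CH and the unboundedness of $Y$), via \cite[Theorem~11]{SchTal10}. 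Your proposal to take an independently chosen Luzin-type $Y$ and ``engineer a continuous map coordinate by coordinate'' gives no mechanism for producing a dominating image of $X\times Y$ while simultaneously keeping $X$ productively Hurewicz; the combinatorial entanglement of $X$ and $Y$ inside a single tower is precisely what makes both halves work at once.
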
 

We do not know whether the conclusion of Theorem~\ref{main_ch}
is actually a consequence  of CH. The space we construct in 
the proof of Theorem~\ref{main_ch} answers \cite[Problems~7.6,7.8]{SzeTsa17} in the negative. 

We refer the reader to \cite{Bla10} for the definitions of  cardinal
characteristics we use, 
\cite{Eng} for topological notions we use but not define,
and to 
\cite{RepZdo17} for more motivation behind 
the research done in this paper.

\section{Productively Hurewicz spaces in the Laver model} \label{sec_laver}

This section is mainly devoted to the proof of Theorem~\ref{main_laver}.

\begin{definition} \label{def01}
$X\subset 2^\w$ is said to satisfy  property
$(\dagger)$, if for every function $M$ assigning to each countable
subset $Q$ of $X$ a Menger subset $M(Q)\cap Q=\emptyset$ of $ 2^\w$,
there exists a family $\mathsf Q\subset [X]^\w$ of size $|\mathsf
Q|=\w_1$  such
that
 $X\subset\bigcup_{Q\in\mathsf Q}(2^\w\setminus M(Q))$. 
 \hfill $\Box$
\end{definition}

Let us note that under CH any $X\subset 2^\w$ satisfies $(\dagger)$.

The following lemma is the key part of the proof of
Theorem~\ref{main_laver}. Its proof is reminiscent of that of
\cite[Theorem~3.2]{MilTsa10}.
 We will use the notation from \cite{Lav76} with  only differences being  that
smaller conditions in a forcing poset  are stronger, i.e., 
 carry more information about the generic filter, and the ground model is (nowadays standardly) denoted by $V$.
We shall work in $V[G_{\w_2}]$, where $V$ 
satisfies GCH,
  $G_{\w_2}$ is
$\IP_{\w_2}$-generic and $\IP_{\w_2}$ is the iteration of length
$\w_2$ with countable supports of the Laver forcing, see
\cite{Lav76} for details. For $\alpha\leq \w_2$ 
 we shall denote $G_{\w_2}\cap
\IP_\alpha$ by $G_\alpha$, For a Laver tree $T\subset\w^{<\w}$ we denote by $T\la
0\ra$ its root. If $s\in T$, $s\geq T\la 0\ra$, then 
we denote by $S_T(s)$ the family of all immediate successors of
$s$ in $T$.

As usually, 
$\forall^*$ means ``for all but finitely many''.

A subset $C$ of $\w_2$ is called an \emph{$\w_1$-club} if it is
unbounded and for every $\alpha\in\w_2$ of cofinality $\w_1$, if
$C\cap\alpha$ is cofinal in $\alpha$ then $\alpha\in C$.

\begin{lemma} \label{laver}
In the Laver model every  $X\subset 2^\w$ with the Hurewicz property satisfies $(\dagger)$. 
\end{lemma}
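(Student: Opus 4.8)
The plan is to argue inside $V[G_{\w_2}]$ by a reflection-plus-fusion argument. Fix a Hurewicz $X\subseteq 2^\w$ and a function $M$ as in $(\dagger)$. By a standard chain-of-elementary-submodels argument and the fact that $\IP_{\w_2}$ has the $\w_2$-cc with countable supports over a model of GCH, one finds an $\w_1$-club $C\subseteq\w_2$ such that for every $\alpha\in C$ of cofinality $\w_1$ the set $X_\alpha:=X\cap V[G_\alpha]$ is itself a Hurewicz subspace of $2^\w$ coded in $V[G_\alpha]$, the restriction $M\uhr[X_\alpha]^\w$ is essentially coded in $V[G_\alpha]$ (here one uses that a Menger subset of $2^\w$ in an inner model is still ``Menger enough'' relative to the names involved), and $X=\bigcup_{\alpha\in C,\ \cf\alpha=\w_1}X_\alpha$. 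The goal is then to produce, for each such $\alpha$, a single countable $Q_\alpha\in[X_\alpha]^\w\subseteq[X]^\w$ with $X_\alpha\cap M(Q_\alpha)=\emptyset$, since then $\mathsf Q:=\{Q_\alpha:\alpha\in C,\ \cf\alpha=\w_1\}$ has size $\le\w_1$ and witnesses $(\dagger)$: every $x\in X$ lies in some $X_\alpha$, hence in $2^\w\sm M(Q_\alpha)$. Note CH holds in $V[G_\alpha]$ (GCH in $V$, countable support iteration of length $\alpha<\w_2$ of proper forcing of size $\w_1$), so there $X_\alpha$ trivially satisfies $(\dagger)$; but what we actually need is a single $Q$ working for the \emph{specific} Menger set $M(Q)$, which is the genuinely new content coming from the next step between $V[G_\alpha]$ and $V[G_{\w_2}]$.

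The heart of the matter is the one-step (equivalently, tail) analysis: working over $V[G_\alpha]$, the remaining iteration $\IP_{[\alpha,\w_2)}$ looks like a countable-support iteration of Laver forcing, and we must show it adds a countable $Q\subseteq X_\alpha$ as required. Here I would invoke the key lemma of \cite{Lav76} — the Laver property / the combinatorial lemma on fusion of Laver trees — in the style of \cite{MilTsa10}: given the $\IP_{[\alpha,\w_2)}$-name $\dot M(\dot Q)$ for the Menger set assigned to the generically added $\dot Q$, one performs a fusion argument along a Laver condition $T$ to read off a ground-model (i.e.\ $V[G_\alpha]$) continuous map $f:X_\alpha\to\w^\w$ capturing, for each $x\in X_\alpha$, enough of the ``$\w^\w$-trace'' of the eventual-dominance witness to $x\notin M(\dot Q)$. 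Because $X_\alpha$ is Hurewicz in $V[G_\alpha]$, $f[X_\alpha]$ is bounded by some $g\in\w^\w\cap V[G_\alpha]$; feeding this single $g$ back through the fusion (this is exactly where the Laver key lemma lets one shrink to one condition handling all of $X_\alpha$ uniformly) yields one Laver tree $T^*\le T$ forcing a specific countable $Q$ — extracted from branches of $T^*$ — with $\check X_\alpha\cap\dot M(\dot Q)=\emptyset$. Since this is forced, it is true in $V[G_{\w_2}]$, giving the desired $Q_\alpha$.

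The main obstacle, and where care is needed, is the reflection step combined with the interaction of ``Menger'' with the iteration. ``$A$ is Menger'' is a $\Pi^1_2$-type statement (for every continuous $f$ into $\w^\w$, $f[A]$ is non-dominating), so it is not automatically absolute either up or down between $V[G_\alpha]$ and $V[G_{\w_2}]$; one must be careful that the object $M(Q)$ — which in $V[G_{\w_2}]$ is Menger — behaves correctly when $Q\in V[G_\alpha]$. The way to handle this is to note we only ever need the \emph{consequence} of Mengerness of $M(Q)$ along the \emph{single} continuous function arising from the fusion, and non-domination of a set of reals in $V[G_{\w_2}]$ by a ground-model real is exactly the kind of statement the Laver key lemma controls. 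A secondary technical point is ensuring the club $C$ can be chosen so that $M\uhr[X_\alpha]^\w$ genuinely reflects — this uses the $\w_1$-club structure and the fact that Laver forcing is proper and $\w^\w$-bounding-like behaviour of the iteration on the relevant names; one closes under the (countably many per step) names needed to define $M$ on countable subsets of $X_\alpha$. I expect these absoluteness bookkeeping issues, rather than the fusion itself, to consume most of the proof.
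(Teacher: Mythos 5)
Your overall scaffolding (reflect to an $\w_1$-club, reduce to the tail iteration over $V[G_\alpha]$, invoke Laver's key lemma) points in the right direction, but the reduction you build on it does not work. You want, for each $\alpha\in C$, a \emph{single} countable $Q_\alpha\subseteq X_\alpha$ with $X_\alpha\cap M(Q_\alpha)=\emptyset$. No such $Q_\alpha$ need exist: since singletons are Menger, $M$ may be defined by $M(Q)=\{x_Q\}$ with $x_Q\in X\sm Q$ arbitrary, and then $X\cap M(Q)\neq\emptyset$ for \emph{every} countable $Q\subseteq X$. Since any $X$ of size $\w_1$ is Hurewicz in the Laver model ($\hot b=\w_2$ there) and is contained in $V[G_\alpha]$ for all large $\alpha$, one has $X_\alpha=X$ on a tail of $C$, so your intermediate goal is outright false for this $M$, even though $(\dagger)$ itself holds trivially for it. The decomposition actually needed is the opposite one: fix a \emph{single} $\alpha\in C$, take $\mathsf Q=[X]^\w\cap V[G_\alpha]$ (of size $\w_1$ by CH in $V[G_\alpha]$), and show that every $x\in X$ avoids $M(Q)$ for \emph{some} $Q\in\mathsf Q$ depending on $x$; different points are handled by different $Q$'s, and no single $Q$ is asked to work for an uncountable set.

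The fusion step as you describe it is also not viable. You propose to read off a continuous $f:X_\alpha\to\w^\w$ ``capturing the eventual-dominance witness to $x\notin M(\dot Q)$'' and then bound $f[X_\alpha]$ using the Hurewicz property of $X_\alpha$ --- but $x\notin M(Q)$ is the conclusion to be proved, so there is no witness to capture, and this is not where the Hurewicz property enters. The argument in the paper runs by contradiction: assume some condition forces a name $\name{x}$ into $\bigcap_{Q\in\mathsf Q}\name{M}(Q)$; apply Lemma~14 of \cite{Lav76} to $\name{x}$ to attach finite sets $U_s$ of reals to the nodes of the first coordinate; use the Hurewicz property of $X$ only through the reflected $G_\delta$-separation of $X$ from countable sets disjoint from it, which lets one prune so that $U_s\subseteq X$; fuse further so that the levels $K_m=\bigcup\{U'_t:|t|\leq m\}$ are compact, yielding a countable $Q_*=\bigcup_m K_m\in\mathsf Q$; and finally play the Menger property of $M(Q_*)$ (non-domination over the $V[G_\alpha]$-coded map $\phi(z)(m)=\min\{n: z\notin O_n(K_m)\}$, which is exactly the second club condition) against the fact that the pruned tree forces $\phi(\name{x})$ to dominate every ground-model real. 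Your sketch inverts the roles of the two covering properties and omits the compactness and pruning machinery that makes $\phi$ well defined and coded in the intermediate model, which is where the real work of the proof lies.
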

\begin{proof}
First let us work in $V[G_{\w_2}]$.
Let $M$ be such as in the definition of $(\dagger)$.
By a standard closing-off argument
there exists an $\w_1$-club $C\subset \w_2$ such that for every $\alpha\in C$
the following conditions are satisfied:
\begin{itemize}
\item $X\cap V[G_\alpha]\in V[G_\alpha]$\footnote{Note that for a set $A\in V[G_{\w_2}],$ the inclusion $A\subset V[G_\alpha]$ does not imply $A\in V[G_\alpha]$. For example,
if $A\subset\w$, then $A\subset V$ because $\w\subset V$. Thus, in $V[G_{\w_2}]$
there are $\w_2$-many subsets of $\w$, each of them is a subset of $V$, but only $\w_1$-many of them are \emph{elements} of $V$.};
\item For every $Q\in [X]^\w\cap V[G_\alpha]$ 
and every continuous map $\phi:M(Q)\to\w^\w$ coded in $V[G_\alpha]$,
there exists $h\in\w^\w\cap V[G_\alpha]$ such that 
$h\not\leq^*\phi(y)$ for any $y\in M(Q)$; and
\item For every $E\in [ 2^\w]^\w\cap V[G_\alpha] $ disjoint from $X$
there exists a $G_\delta$ set $O\supset E$ coded in $V[G_\alpha]$ such that
$O\cap X=\emptyset$.
\end{itemize}
The existence of a $G_\delta$-set required in the last item above 
is a well-known consequence of the Hurewicz property, see, e.g.,
\cite[Theorem~5.7]{COC2}.

Let us fix $\alpha\in C$.
We claim that
$X\subset \bigcup_{Q\in \mathsf Q} (2^\w\setminus M(Q))$,
where $\mathsf Q=[X]^\w\cap V[G_\alpha],$ which
 would complete our proof. By \cite[Lemma~11]{Lav76},
there is no loss of generality
in assuming that $\alpha=0$: 
We still have GCH in $V[G_\alpha]$,
and the quotient forcing 
$\IP_{[\alpha,\w_2)}$ is again the iteration of length
$\w_2$ with countable supports of the Laver forcing, defined in $V[G_\alpha]$. With this convention we have $V[G_\alpha]=V$, and hence the three items considered above
hold for $V$ instead of $V[G_\alpha]$.
 
Now we start working in $V$. Let $\name{X}$ and $\name{M}$ be $\IP_{\w_2}$-names for $X$ and $M$, respectively.
Suppose that, contrary to our claim, there exists $p\in G_{\w_2}$ and a $\IP_{\w_2}$-name $\name{x}$
such that $p$ forces $  \name{X}$ to be Hurewicz and $\name{x}\in\name{X}\setminus \bigcup_{Q\in \mathsf Q}(2^\w\setminus \name{M}(Q))$.
 Applying \cite[Lemma~14]{Lav76}
to the sequence $\la \name{a}_i:i\in\w\ra$ such that $\name{a}_i=\name{x}$ for all $i\in\w$,
we get 
a condition $p'\leq p$ such that $p'(0)\leq_0 p(0)$, and a finite set $U_s$
of reals for every $s\in p'(0)$ with $p'(0)\la 0\ra\leq s$, such that for each
  $s$ as above the following property is satisfied:
\smallskip

 \begin{itemize}
\item[$(1)_{p',s}$] 
     $
  \forall n\in\w\: \forall^* t\in S_{p'(0)}(s)\: \big( p'(0)_t\bigvid p'\uhr[1,\w_2)\forces \exists u\in U_s\: (\name{x}\uhr n =u\uhr n)\big)$. 
\end{itemize}
Repeating the argument from the proof of \cite[Lemma~2.3]{Zdo20}, namely the part before equation $(5)$ there, we could, passing to a 
stronger condition, if necessary, assume that 
$U_s\subset X$ for all $s\in p'(0)$ such that $p'(0)\la 0\ra\leq s$.
For this the third assumption on ordinals $\alpha\in C$ is crucial.

\begin{claim}\label{cl01}
If $s\in p'(0)$, $p'(0)\la 0\ra\leq s$, $u\in U_s$, and $n_0\in\w$ are such that 
there is no $r\leq p'(0)_s\bigvid p'\uhr[1,\w_2)$
forcing $\name{x}\uhr n_0= u\uhr n_0$, then 
$(1)_{p',s}$ is still satisfied when $U_s$ is replaced with 
$U_s\setminus\{u\}$.
\end{claim}
\begin{proof}
Suppose that  $(1)_{p',s}$ with $U_s\setminus\{u\}$ instead of $U_s$ 
fails for for some  $n\in\w$. There is no loss of 
generality in assuming that $n\geq n_0$. This means that 
the set 
$$T=\big\{t\in S_{p'(0)}(s)\: : \: p'(0)_t\bigvid p'\uhr[1,\w_2)\not\forces \exists v\in U_s\setminus\{u\}\: (\name{x}\uhr n =v\uhr n)\big\}$$
is infinite. For each $t\in T$ find
$r_t\leq p'(0)_t\bigvid p'\uhr[1,\w_2)$ such that 
$$r_t\forces \forall v\in U_s\setminus\{u\}\: (\name{x}\uhr n \neq v\uhr n).$$
Let $t_1\in T$ be such that 
$$p'(0)_{t_1}\bigvid p'\uhr[1,\w_2)\forces \exists v\in U_s\: (\name{x}\uhr n =v\uhr n)\big).$$
From the two formulas displayed above it follows that 
$r_{t_1}\forces \name{x}\uhr n = u\uhr n,$ and hence also 
$r_{t_1}\forces \name{x}\uhr n_0 = u\uhr n_0$ because $n\geq n_0$,
which is impossible by our assumption. 
\end{proof}

For a subset $K$ of $2^\w$ we denote by 
 $O_n(K)$ the set $\{z\in 2^\w: z\uhr n=y\uhr n $ for some $y\in K\} $. 

\begin{claim}\label{cl02}
For every $s\in p'(0)$, $p'(0)\la 0\ra\leq s$, and  
 every
   $t\in S_{p'(0)}(s)$ there exists 
$U'_t\subset U_t$ such that 
\begin{itemize}
\item $(1)_{p',t}$ is still satisfied when $U_t$ is replaced with 
$U'_t$; and
\item for every $n\in\w$  and all but finitely many 
$t\in S_{p'(0)}(s)$ we have $U'_t\subset O_n(U_s)$.
\end{itemize}
\end{claim}
\begin{proof}
Fix  $n\in \w$.
Then by $(1)_{p',s}$ there exists $A\in [S_{p'(0)}(s)]^{<\w}$ such that 
\begin{equation}\label{eq02}
 p'(0)_t\bigvid p'\uhr[1,\w_2)\forces\exists u\in U_s (u\uhr n=\name{x}\uhr n)    
\end{equation}
 for every $t\in S_{p'(0)}(s)\setminus A$.  Note that (\ref{eq02}) implies that if $t\in S_{p'(0)}(s)\setminus A$, $w\not\in O_n(U_s)$, and 
$r\leq p'(0)_t\bigvid p'\uhr[1,\w_2)$, then $r$ forces $w\uhr n\neq\name{x}\uhr n$. Applying Claim~\ref{cl01} we conclude that $(1)_{p',t}$ is satisfied for
$t\in S_{p'(0)}(s)\setminus A$ if we replace $U_t$ with $U_t\cap O_n(U_s)$.

Applying the same argument recursively for every $n\in\w$ we can get an increasing sequence $\la A_n:n\in\w\ra$ of finite subsets of $S_{p'(0)}(s)$ with $\bigcup_{n\in\w}A_n=S_{p'(0)}(s)$ such that $(1)_{p',t}$ is satisfied for
$t\in S_{p'(0)}(s)\setminus A_n$ if we replace $U_t$ with $U_t\cap O_n(U_s)$.
It remains to set $U'_t=U_t$  for all $t\in A_0$ and $U'_t=U_t\cap O_n(U_s)$  for all $t\in A_{n+1}\setminus A_n$ and note that these $U'_t$ are as required.
\end{proof}

 \begin{claim}\label{cl03}
 Let $K\subset 2^\w$ be compact, and for every $i\in\w$ let 
 $\la U^i_m:m\in\w\ra$ be a sequence of finite subsets of $2^\w$ such that 
 $$ \forall i\in\w\: \forall n\in\w\: \forall^* m\in\w  \: (U^i_m\subset O_n(K)) .$$
 Then for every $i\in\w$ there exists $m_i\in\w$ such that
 $K\cup\bigcup_{i\in\w}\bigcup_{m\geq m_i}U^i_m$ is compact. 
 \end{claim}
\begin{proof}
It suffices to choose $m_i$ such that $U^i_m\subset O_i(K)$
for all $m\geq m_i$, the standard details are left to the reader.
\end{proof}
 
After three auxiliary claims above, we are in a position to proceed with the proof of Lemma~\ref{laver}. Combining Claims~\ref{cl02} and \ref{cl03}, we can get a Laver condition $T\leq_0 p'(0)$ 
and $U'_t\subset U_t$ for every splitting node 
$t\in T$ such that
\begin{itemize}
\item[$(i)$] letting $p''=T\bigvid p'\uhr[1,\w_2)$, we have $p''\in G_{\w_2}$ and 
$(1)_{p'',t}$ is satisfied for all  splitting nodes 
$t\in T=p''(0)$; 
\item[$(ii)$] For every $s\geq T\la 0 \ra$, $n\in\w$, and 
all but finitely many $t\in S_T(s)$ we have $U'_t\subset O_n(U'_s)$; and
 \item[$(iii)$]
$K_m:=\bigcup\big\{U'_t\: :\:t\in T,T\la 0\ra\leq t,|t|\leq m\big\}$
is a compact subset of $2^\w$ for all   $m\in\w$.
(Note that
$K_m=\emptyset$ for $m<m_0:=|T\la 0\ra|=|p'(0)\la 0\ra|$.)
\end{itemize}
Set $Q_*=\bigcup_{m\in\w}K_m \in [X]^\w$ and consider the map 
$\phi:2^\w\setminus Q_*\to\w^\w$ defined as follows:
$$\phi(z)(m)=\min\{n\in\w:z\not\in O_n(K_m)\}.$$
Since $K_m$ is closed and $z\not\in K_m$, $\phi$ is well-defined. 
The second item 
describing properties of ordinals in $C$ yields 
$h\in\w^\w\cap V$ such that 
$h\not\leq^* \phi(y)$ for any $y\in \name{M}(Q_*)$. It follows that
$$p''\forces\name{x}\in \name{X}\setminus \bigcup_{Q\in\mathsf Q}(2^\w\setminus \name{M}(Q))=\name{X}\cap\bigcap_{Q\in\mathsf Q}\name{M}(Q)\subset \name{X}\cap \name{M}(Q_*),$$
and hence 
\begin{equation}\label{contra}
p''\forces h\not\leq^*\phi(\name{x}).
\end{equation}
On the other hand, recursively removing finitely many immediate successors of every splitting node $s$ of $T=p''(0)$, using $(i)$ and $(ii)$, we can  get a Laver tree $T'\leq_0 T$ such  that
\begin{equation}\label{contra1}
T_t\bigvid p'\uhr [1,\w_2)\forces\exists u\in U'_s\:(\name{x}\uhr h(m)=u\uhr h(m))
\end{equation}
for all $m\geq m_0$, $s\in T'\cap\w^m$, 
and $t\in S_{T'}(s)$. Equation~(\ref{contra1})
gives
$T'_t\bigvid p'\uhr [1,\w_2)\forces \name{x}\in O_{h(m)}(K_m) $
for all $m,s,t$ as above, and therefore 
$$T' \bigvid p'\uhr [1,\w_2)\forces\forall m\geq m_0\:(\name{x}\in O_{h(m)}(K_m)),$$
or, equivalently,
$$T' \bigvid p'\uhr [1,\w_2)\forces\forall m\geq m_0\:(\phi(\name{x})>h(m)),$$
which together with $T'\bigvid p'\uhr[1,\w_2)\leq p''$
contradicts (\ref{contra}) and thus finishes our proof.
\end{proof}

The next lemma demonstrates the relation between $(\dagger)$
and products with Menger spaces.

\begin{lemma} \label{main1}
 Suppose that $\hot b>\w_1$. Let $Y \subset  2^\w$ be a Menger  space
and $X\subset  2^\w$ a Hurewicz space  satisfying $(\dagger)$.  Then $X\times Y$ is Menger.
\end{lemma}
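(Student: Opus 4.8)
\textbf{Proof plan for Lemma~\ref{main1}.}
The plan is to use the characterization of the Menger property via continuous images in $\w^\w$: it suffices to show that for every continuous $f:X\times Y\to\w^\w$ the range $f[X\times Y]$ is non-dominating. So fix such an $f$ and suppose, towards a contradiction, that $f[X\times Y]$ is dominating. The strategy is to ``slice'' $f$ along $X$: for each $x\in X$ the restriction $f_x:=f(x,\cdot):Y\to\w^\w$ is continuous, and since $Y$ is Menger, $f_x[Y]$ is non-dominating; pick $g_x\in\w^\w$ with $g_x\not\leq^* y$ for all $y\in f_x[Y]$. The point is to organize these witnesses so that property $(\dagger)$ applies, producing an $\w_1$-sized family covering $X$, and then to exploit $\hot b>\w_1$ to merge the slice-information into a single dominating failure, contradicting the assumption.

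First I would make the dependence of $g_x$ on $x$ ``Borel/continuous on countable pieces'': given a countable $Q\subset X$, the map $f$ restricted to $Q\times Y$ is coded by a countable object, so one can uniformly choose a single $h_Q\in\w^\w$ such that $h_Q\not\leq^* f(x,y)$ for all $x\in Q$, $y\in Y$ — here one uses that a countable union of non-dominating sets in a Menger-type situation is still non-dominating, or more precisely that $\{f(x,\cdot)[Y]:x\in Q\}$ has a common non-dominating bound obtained by a diagonal argument inside the Menger space $Y$ (alternatively via $\hot b>\w_1\geq|Q|$). Then I would define $M(Q)$ to be a Menger subset of $2^\w$ encoding the ``bad'' part of $Y$ relative to $Q$: roughly, $M(Q)$ should be (a homeomorphic copy, living off $Q$, of) the set of $y\in Y$ for which $f(x,y)$ is large relative to the chosen bound for some $x\in Q$ — arranged so that if $x\in X\setminus$ (the corresponding slice behavior captured by $M(Q)$) then the $h_Q$-bound genuinely works at $x$. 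The delicate bookkeeping is to set this up so that $M(Q)\cap Q=\emptyset$ and $M(Q)$ is Menger (it will be a continuous image or closed-in-Menger subset of $Y$, hence Menger).

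Next, applying $(\dagger)$ to this $M$ yields $\mathsf Q\subset[X]^\w$ with $|\mathsf Q|=\w_1$ and $X\subset\bigcup_{Q\in\mathsf Q}(2^\w\setminus M(Q))$. This means every $x\in X$ lies outside some $M(Q)$, which should translate to: there is $Q\in\mathsf Q$ such that $h_Q\not\leq^* f(x,y)$ for all $y\in Y$. Now use $\hot b>\w_1$: the family $\{h_Q:Q\in\mathsf Q\}$ has size $\w_1<\hot b$, so it is bounded, say by $h\in\w^\w$ with $h_Q\leq^* h$ for all $Q\in\mathsf Q$. Then for every $x\in X$, picking the appropriate $Q$, we get $h\not<^* f(x,y)$ in a suitable sense for all $y\in Y$ — I would need to arrange the definitions so this upgrades to: $h$ (or a mild modification of $h$ depending only on finitely much data) is not dominated by any element of $f[X\times Y]$. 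That contradicts the assumption that $f[X\times Y]$ is dominating, completing the proof.

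\textbf{Main obstacle.} The crux is the ``slicing + reassembly'' step: getting from the pointwise witnesses $g_x$ (defined only after fixing $x$) to a single function $h$ that works uniformly, while keeping the auxiliary sets $M(Q)$ genuinely Menger and disjoint from $Q$. One must be careful that $h_Q\not\leq^* f(x,y)$ at infinitely many coordinates is preserved under the boundedness argument — a naive bound might destroy the infinitely-many-coordinates guarantee — so the right formulation is to work with an $x$-independent reindexing (e.g. comparing on blocks, or passing to the characterization of Menger via $\w^\w$-images up to finite modifications) and to exploit that $X$ is Hurewicz (not merely Menger), which is what makes $(\dagger)$ available and also gives extra control on how $f$ behaves along $X$. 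I expect this compatibility between the $\leq^*$-bookkeeping and the $(\dagger)$-covering to be where all the real work lies; once those are aligned, the contradiction with $\hot b>\w_1$ is immediate.
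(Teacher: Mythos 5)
Your overall architecture matches the paper's: slice over countable $Q\subset X$, extract a witness $h_Q$ from the Mengerness of $Q\times Y$, feed a suitable operator $M$ into $(\dagger)$, and finish with $\hot b>\w_1$. (Incidentally, the worry in your last paragraph is unfounded: if $h_Q\leq^* h$ and $h_Q\not\leq^* f(x,y)$, then $h\not\leq^* f(x,y)$, so any $\leq^*$-upper bound of the $\w_1$-sized family $\{h_Q:Q\in\mathsf Q\}$ automatically witnesses non-domination; the $\leq^*$-bookkeeping is not where the difficulty lies.) The genuine gap is the one step you leave unspecified, namely the definition of $M(Q)$, and the sketch you do offer would not work. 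For $(\dagger)$ to be usable, the statement $x\notin M(Q)$ must carry information about the first coordinate $x$; a ``homeomorphic copy, living off $Q$, of a set of $y\in Y$'' placed somewhere in $2^\w\setminus Q$ tells you nothing about $x$, so the covering $X\subset\bigcup_{Q}(2^\w\setminus M(Q))$ would be vacuous. What is needed is that $M(Q)$ be a Menger set, disjoint from $Q$, containing the projection onto the \emph{first} coordinate of the failure set $\{(x,y): h_Q\leq^* f(x,y)\}$.

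The missing idea is how to make that projection Menger: the failure set lives in $X\times Y$, and neither $X\times Y$ nor its arbitrary subsets are known to be Menger (that is the whole point of the lemma). One must pass to the ambient space $2^\w\times Y$, which \emph{is} Menger (compact times Menger), and realize the failure set as an $F_\sigma$ subset of it -- Mengerness passes to $F_\sigma$ subsets but not to $G_\delta$ ones, so you cannot simply extend $f$ to a $G_\delta$ superset of $X\times Y$ and proceed. The paper sidesteps this by working with covers of $X\times Y$ by sets clopen in $2^\w\times Y$: it chooses $\V^Q_n\in[\U_n]^{<\w}$ covering $Q\times Y$ infinitely often, sets $W^Q=\bigcap_m\bigcup_{n\geq m}\cup\V^Q_n$, and takes $M(Q)=\pr_1\bigl[(2^\w\times Y)\setminus W^Q\bigr]$, the continuous image of an $F_\sigma$ subset of the Menger space $2^\w\times Y$; then $\hot b>\w_1$ is used to merge the $\w_1$ many selections $\la\V^Q_n:n\in\w\ra$ into one. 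Your functional version can be repaired in the same spirit: write the failure set as $\bigcup_n F_n$ with $F_n=\{(x,y):\forall m\geq n\ (h_Q(m)\leq f(x,y)(m))\}$ closed in $X\times Y$, observe that the closure $\overline{F_n}$ in $2^\w\times Y$ still misses $Q\times Y$ (since $Q\times Y\subset X\times Y$ and $F_n$ is closed in $X\times Y$ and disjoint from $Q\times Y$), and take $M(Q)=\pr_1\bigl[\bigcup_n\overline{F_n}\bigr]$. Until some such construction is supplied, the application of $(\dagger)$ does not get off the ground.
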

\begin{proof}
Fix a sequence $\la \U_n:n\in\w\ra$ of countable covers of $X\times Y$
by clopen subsets of $ 2^\w\times Y$.
For every $Q\in [X]^\w $ using that $Q\times Y$ is Menger,
we can find  a sequence $\la \V^Q_n:n\in\w\ra$ such that
$\V^Q_n\in [\U_n]^{<\w}$ for all $n\in\w$ and 
$Q\times Y\subset W^Q:=\bigcap_{m\in\w}\bigcup_{n\geq m}\cup\V^Q_n$.
Then $(2^\w\times Y)\setminus W^Q$ is Menger being an $F_\sigma$-subset of the Menger space $2^\w\times Y$, and it is
disjoint from $Q\times Y$, and hence its projection
$M(Q)$ onto the first coordinate is a Menger subspace of 
$2^\w$ disjoint from $Q$.  

Since $X$ satisfies $(\dagger)$, there exists
$\mathsf Q\subset [X]^\w$ of size $|\mathsf Q|=\w_1$
such that $X\subset\bigcup_{Q\in\mathsf Q}(2^\w\setminus M(Q))$.
Since $|\mathsf Q|<\hot b$, there exists a 
sequence 
$\la \V_n:n\in\w\ra$ such that
$\V_n\in [\U_n]^{<\w}$ for all $n\in\w$, and for every
$Q\in\mathsf Q$ there exists $m(Q)\in\w$ such that 
$\V^Q_n\subset\V_n$ for all $n\geq m(Q)$.
We claim that $X\times Y\subset\bigcup_{n\in\w}\cup\V_n$.
Indeed, given $\la x,y\ra\in X\times Y$, find $Q\in\mathsf Q$
such that $x\not\in M(Q)$. This implies $\la x,y\ra \in W^Q$, and hence
there exists $n\geq m(Q)$ with 
$\la x,y\ra\in\cup\V^Q_n$, consequently $\la x,y\ra\in\cup\V_n$
because $\V^Q_n\subset\V_n$, which completes our proof.
\end{proof}

Finally, we can prove the characterization of Hurewicz subspaces of $ 2^\w$ which holds in the
Laver model and implies Theorem~\ref{main_laver}.

\begin{proposition}\label{prop_char}
In the Laver model, for a subspace $X$ of $ 2^\w$ the following conditions are equivalent:
\begin{enumerate}
\item $X$ is Hurewicz; 
\item $X$ satisfies $(\dagger)$; 
\item $X$ is productively Menger; and 
\item $X$ is productively Hurewicz. 
\end{enumerate}
 \end{proposition}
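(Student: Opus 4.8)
The plan is to prove Proposition~\ref{prop_char} by establishing the cycle of implications $(1)\Rightarrow(2)\Rightarrow(3)\Rightarrow(4)\Rightarrow(1)$, drawing on the results already available in the excerpt together with one external fact. The implication $(1)\Rightarrow(2)$ is precisely Lemma~\ref{laver}, which asserts that in the Laver model every Hurewicz $X\sbst 2^\w$ satisfies $(\dagger)$, so nothing further is needed there.

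For $(2)\Rightarrow(3)$ I would invoke Lemma~\ref{main1}. Note that in the Laver model $\hot b=\w_2>\w_1$, so the cardinal hypothesis of Lemma~\ref{main1} is satisfied. Given any Menger space $Y$, up to homeomorphism we may assume $Y\sbst 2^\w$ (Menger spaces are Lindel\"of, hence here zero-dimensional metrizable separable, so subspaces of the Cantor space); since $X$ is Hurewicz and satisfies $(\dagger)$, Lemma~\ref{main1} gives that $X\times Y$ is Menger. Thus $X$ is productively Menger. A small point to address is that Lemma~\ref{main1} is stated for $Y\sbst 2^\w$, so I would remark that an arbitrary Menger space embeds into $2^\w$ and the product is Menger iff the homeomorphic copy is; this is routine.

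For $(3)\Rightarrow(4)$ I would appeal to the fact cited in the introduction: by \cite[Theorem~4.8]{SzeTsa17}, if $\hot b=\hot d$ then every productively Menger space is productively Hurewicz. In the Laver model $\hot b=\hot d=\w_2$ (both the Laver reals and a standard reflection argument give $\hot b=\w_2$, and $\hot d=\w_2$ since $\cc=\w_2$), so the hypothesis $\hot b=\hot d$ holds and the implication follows immediately. Finally, $(4)\Rightarrow(1)$ is the trivial direction remarked on in the introduction: singletons are Hurewicz, so if $X$ is productively Hurewicz then $X\times\{pt\}\cong X$ is Hurewicz.

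The only genuinely substantive step is $(1)\Rightarrow(2)$, and that has already been carried out as Lemma~\ref{laver}; the remaining implications are bookkeeping, combining Lemma~\ref{main1}, the computation $\hot b=\hot d=\w_2$ in the Laver model, and the cited \cite[Theorem~4.8]{SzeTsa17}. The main thing to be careful about is making explicit that the Laver model satisfies both $\hot b>\w_1$ and $\hot b=\hot d$, since these are the hypotheses of Lemma~\ref{main1} and of \cite[Theorem~4.8]{SzeTsa17} respectively; both are standard consequences of the countable support iteration of Laver forcing over a model of GCH. I would then close by noting that Theorem~\ref{main_laver} is an immediate consequence of the equivalence $(1)\Leftrightarrow(3)\Leftrightarrow(4)$, together with the observation that a Lindel\"of product $X\times Y$ of a Hurewicz $X$ and Menger $Y$ is, being Lindel\"of zero-dimensional (after the relevant reductions) or by a direct Michael-space-free argument, covered by the case of subspaces of $2^\w$.
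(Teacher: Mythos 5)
Your proposal is correct and follows exactly the paper's route: $(1)\Rightarrow(2)$ by Lemma~\ref{laver}, $(2)\Rightarrow(3)$ by Lemma~\ref{main1} using $\hot b>\w_1$ in the Laver model, $(3)\Rightarrow(4)$ by \cite[Theorem~4.8]{SzeTsa17} using $\hot b=\hot d$, and $(4)\Rightarrow(1)$ trivially. The only caveat is your aside that an arbitrary Menger space embeds into $2^\w$ --- that is false in general (Menger spaces need not be separable metrizable), but it is harmless here because the paper's standing convention restricts ``productively Menger'' to the realm of subspaces of $2^\w$.
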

\begin{proof}
The implication $(1)\to(2)$  is established in Lemma~\ref{laver}.
 The implication $(2)\to (3)$ is proved in Lemma~\ref{main1}
and thus requires only $\hot b>\w_1$. And finally, $(3)\to (4)$ 
follows from \cite[Theorem~4.8(2)]{SzeTsa17} because $\hot b=\hot d$ holds
in the Laver model, while $(4)\to (1)$ is obvious.
\end{proof}

Finally, 
by nearly the same argument as at the end of \cite{RepZdo17} we can prove that  Theorem~\ref{main_laver}
follows from Proposition~\ref{prop_char}. 
Again, we present
its proof for the sake of completeness.
A family $\F\subset[\w]^\w$ is called a \emph{semifilter} if for every
$F\in\F$ and $X\subset \w$, if $|F\setminus X|<\w$
then $X\in\F$. 
Each semifilter is considered with the topology inherited 
from the Cantor space $2^\w$ which we identify with $\mathcal P(\w)$ via characteristic functions.

The proof of  the second part of Theorem~\ref{main_laver}  uses   characterizations
of the Hurewicz and Menger properties  obtained in \cite{Zdo05}.
Let  $u=\la U_n : n\in\omega\ra$ be a sequence of subsets of a set $X$.
For every $x\in X$ let  $I_s(x,u,X)=\{n\in\omega:x\in U_n\}$. If every
$I_s(x,u,X)$ is infinite (the collection of all such sequences $u$ will be denoted
by $\Lambda_s(X)$), then we shall denote by $\mathcal U_s(u,X)$
the smallest semifilter on $\omega$ containing all $I_s(x,u,X)$.
By \cite[Theorem~3]{Zdo05},  a Lindel\"of topological space $X$  is Hurewicz (Menger) if and only if
for every  $u \in\Lambda_s(X)$ consisting of open sets,
  the semifilter $\mathcal U_s(u,X)$ is Hurewicz (Menger).
The proof given there also works if we consider only those
$\la U_n : n\in\omega\ra\in\Lambda_s(X)$ such that all $U_n$'s belong to a given base of
$X$.

\medskip

\noindent\textit{Proof of Theorem~\ref{main_laver}.} \
Suppose that $X$ is Hurewicz, $Y$ is Menger,   $X\times Y$ is Lindel\"of,
and fix  $w=\la U_n\times V_n :n\in\w\ra\in\Lambda_s(X\times Y)$
consisting of open sets.
Set  $u=\la U_n:n\in\w\ra$,  $v=\la V_n:n\in\w\ra$, and note that
$u\in\Lambda_s(X)$ and $v\in\Lambda_s(Y)$.
It is easy to see
that
$$\U_s(w,X\times Y)=\{A\cap B: A\in \U_s(u,X), B\in \U_s(v,Y)\},$$
and hence $\U_s(w,X\times Y)$ is a continuous image of
$\U_s(u,X)\times \U_s(v,Y)$. By \cite[Theorem~3]{Zdo05} 
$\U_s(u,X)$ and $\U_s(v,Y)$ are Hurewicz and Menger, respectively,  considered as subspaces of $2^\w$, and hence their product is a Menger space by
Proposition~\ref{prop_char}. Thus $\U_s(w,X\times
Y)$ is Menger, being a continuous image of a Menger space. It now
suffices to use \cite[Theorem~3]{Zdo05} again, in the other direction. \hfill $\Box$
\medskip

\section{Productively Hurewicz spaces in models of CH}\label{sec_ch}

In this section we prove Theorem~\ref{main_ch}.

Suppose that CH holds in the ground model $V$ and 
fix $Y=\{ y_\alpha:\alpha<\w_1\}\subset [\w]^\w$ such that
\begin{itemize}
\item
 $y_\beta\subset^*y_\alpha$ for all $\beta>\alpha$;
 \item $y_{\alpha+1}\subset y_\alpha$ for all $\alpha$;
 \item $y_\alpha\setminus y_{\alpha+1}$ is infinite for all $\alpha$; and
\item For every $y\in[\w]^\w$ there exists $\alpha$ with $y_\alpha\not\leq^* y$, where each element of $[\w]^\w$ is identified with its 
 increasing enumerating function. 
 \end{itemize}
Thus $Y$ is an unbounded tower in the terminology of \cite{OreTsa11}.

 In what follows, we shall work in $V[G]$, where $G$ is 
 $\mathit{Fn}(\w_1,2)$-generic over $V$. 
 Here $\mathit{Fn}(\w_1,2)$
 is the standard poset adding $\w_1$ many Cohen reals over $V$.
 
 It is well-known that $Y=\{y_\alpha:\alpha<\w_1\}$ is Menger 
 in $V[G]$, see, e.g., \cite[Theorem~11]{SchTal10}.
 Moreover, $Y$ is also unbounded in $V[G]$ since 
 Cohen reals preserve the unboundedness of ground model unbounded sets.
 Fix an enumeration 
 $[\w]^\w=\{z_\alpha:\alpha\in\w_1\}$ and for every
 $\alpha$ pick  $x_\alpha \in [y_\alpha]^\w$
 such that $x_\alpha\supset y_{\alpha+1},$ $|y_\alpha\setminus x_\alpha|=|x_\alpha\setminus y_{\alpha+1}|=\w$, and
 $z_\alpha\leq^* (y_\alpha\setminus x_\alpha)$.

Since 
$x_\beta\subset y_\beta\subset^* y_{\alpha+1}\subset x_\alpha$
for any $\alpha<\beta$, we conclude that 
  $\{x_\alpha:\alpha<\w_1\}$ is an unbounded tower as well, and hence $X:=\{x_\alpha:\alpha<\w_1\}\cup [\w]^{<\w}$ is productively 
 Hurewicz\footnote{ $X$ is also a $\gamma$-set by the main result of 
 \cite{OreTsa11}.} 
 by \cite[Theorem~6.5(1)]{MilTsaZdo14}.   Now, Theorem~\ref{main_ch} is a direct consequence of the following

 \begin{observation}\label{obv}
 $X\times Y$ is not Menger, and therefore $X$ is not productively Menger.
 \end{observation}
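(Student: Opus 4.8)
The plan is to exhibit an explicit continuous map $f\colon X\times Y\to\w^\w$ whose image is dominating, which by the Hurewicz--type characterization recalled in the introduction (a subspace of a Polish space is Menger iff every continuous image in $\w^\w$ is non-dominating) shows that $X\times Y$ is not Menger. The natural candidate exploits the ``orthogonality'' built into the construction: for $\alpha<\w_1$ we arranged $z_\alpha\leq^*(y_\alpha\setminus x_\alpha)$, and $x_\alpha\supseteq y_{\alpha+1}$ while $y_\alpha\setminus x_\alpha$ is infinite. So given a pair $\la x_\alpha,y_\beta\ra\in X\times Y$, I want to read off, from the interaction of $x_\alpha$ with $y_\beta$, a function that eventually dominates $z_\alpha$ whenever $\beta$ is large enough (say $\beta>\alpha$, so that $y_\beta\subseteq^* y_{\alpha+1}\subseteq x_\alpha$, whence $y_\beta\cap(y_\alpha\setminus x_\alpha)$ is finite and $y_\beta$ ``misses'' the set carrying $z_\alpha$).

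First I would fix the identification of $2^\w$ with $\PP(\w)$ via characteristic functions, so that $X,Y\subseteq 2^\w$ and a point of $X\times Y$ is a pair of subsets of $\w$. Next I would define, for a pair $\la A,B\ra$ with $A,B\in[\w]^\w$, a function $g_{A,B}\in\w^\w$ measuring how sparse $A\setminus B$ is inside $A$ — for instance $g_{A,B}(k)$ equal to the position in the increasing enumeration of $A$ of the $k$-th element of $A\setminus B$ (with a harmless convention when $A\setminus B$ is finite). The point $\la A,B\ra\mapsto g_{A,B}$ is easily checked to be continuous on the clopen set where this is defined, and one extends it continuously (e.g.\ by a constant) elsewhere; the countable set $[\w]^{<\w}\subseteq X$ is irrelevant since $Y\subseteq[\w]^\w$, so I would simply send those extra points to $\mathbf 0$. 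Then I would verify the key domination inequality: for $\alpha<\beta$, since $y_\beta\subseteq^* x_\alpha$, the set $y_\beta\setminus x_\alpha$ is finite, so $y_\alpha\setminus x_\alpha\subseteq^* y_\alpha\setminus y_\beta$ up to finitely many errors, which forces $g_{x_\alpha,y_\beta}\geq^* g_{x_\alpha, y_\alpha}$, and the latter in turn $\geq^*$ the enumerating function of $y_\alpha\setminus x_\alpha$, hence $\geq^* z_\alpha$ (using $z_\alpha\leq^*(y_\alpha\setminus x_\alpha)$ and monotonicity of enumerating functions). Consequently $\{g_{x_\alpha,y_\beta}:\alpha<\beta<\w_1\}$ dominates $\{z_\alpha:\alpha<\w_1\}=[\w]^\w$, so $f[X\times Y]$ is dominating.

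The main obstacle, and the part deserving care, is making the bookkeeping in that last inequality precise: ``$y_\beta\subseteq^* x_\alpha$'' only holds \emph{mod finite}, so I must track how the finitely many exceptional elements of $y_\beta\setminus x_\alpha$ and of $z_\alpha\setminus(y_\alpha\setminus x_\alpha)$ shift the indices in the enumerations, and confirm that after finitely many coordinates everything lines up — this is exactly where ``$\leq^*$'' rather than ``$\leq$'' is doing the work, and where one uses that an $\w_1$-indexed family of functions each of which is $\leq^*$ some fixed $g_{x_\alpha,y_\beta}$ need not be a problem because we only need \emph{domination} of $[\w]^\w$, a property invariant under finite modifications. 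Once the inequality $z_\alpha\leq^* g_{x_\alpha,y_\beta}$ for $\alpha<\beta$ is established, non-Mengerness of $X\times Y$ is immediate from \cite[Theorems~4.3, 4.4]{COC2} as cited above, and the final sentence of the Observation follows since $Y$ is Menger in $V[G]$.
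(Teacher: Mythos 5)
Your overall strategy (a continuous map into $\w^\w$ with dominating image, then the Hurewicz characterization of Menger) is the right one and is essentially what the paper does, but the specific map you propose does not work: the key inequality in your argument is false. The set that codes $z_\alpha$ is $y_\alpha\setminus x_\alpha$, which is \emph{disjoint from} $x_\alpha$; your function $g_{x_\alpha,y_\beta}$ is built from $x_\alpha\setminus y_\beta$ (positions inside the enumeration of $x_\alpha$), so it carries no information about $y_\alpha\setminus x_\alpha$ or $z_\alpha$. Concretely: (a) the middle term of your chain, $g_{x_\alpha,y_\alpha}$, is degenerate, since $x_\alpha\subseteq y_\alpha$ gives $x_\alpha\setminus y_\alpha=\emptyset$; (b) even ignoring that, the monotonicity goes the wrong way --- from $y_\beta\subseteq^* y_\alpha$ one gets $x_\alpha\setminus y_\beta\supseteq^* x_\alpha\setminus y_\alpha$, and enlarging $A\setminus B$ makes its $k$-th element appear \emph{earlier} in the enumeration of $A$, so $g_{x_\alpha,y_\beta}\leq^* g_{x_\alpha,y_\alpha}$, not $\geq^*$; (c) the step ``$g_{x_\alpha,y_\alpha}\geq^*$ the enumerating function of $y_\alpha\setminus x_\alpha$'' has no basis, since a position within the enumeration of $x_\alpha$ need not dominate the value of the corresponding element, and $y_\alpha\setminus x_\alpha$ never enters the definition of $g$ anyway. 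More fundamentally, for $\alpha<\beta$ the pair $\la x_\alpha,y_\beta\ra$ cannot recover $z_\alpha$ at all: $y_\beta\subseteq^* x_\alpha$, so both coordinates are almost disjoint from $y_\alpha\setminus x_\alpha$, and the construction imposes no constraint relating $x_\alpha\setminus y_\beta$ (which is what your $g$ sees) to $z_\alpha$ beyond its being infinite.

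The pairs that do the work are the \emph{diagonal} ones. The paper applies the continuous group operation $\oplus$ (coordinatewise addition mod $2$, i.e.\ symmetric difference) to $X\times Y$; at the diagonal, $x_\alpha\oplus y_\alpha=y_\alpha\setminus x_\alpha$, whose increasing enumeration dominates $z_\alpha$ by the very choice of $x_\alpha$, and since $\{z_\alpha:\alpha<\w_1\}=[\w]^\w$ is dominating, so is $\{x_\alpha\oplus y_\alpha:\alpha<\w_1\}$. The only thing left to verify --- and this is the case analysis occupying most of the paper's proof --- is that every $x\oplus y$ with $x\in X$, $y\in Y$ is an \emph{infinite} subset of $\w$, so that $X\oplus Y\subseteq[\w]^\w$ and the identification with increasing enumerating functions applies; then $X\oplus Y$ is a dominating continuous image of $X\times Y$ inside $[\w]^\w$, hence not Menger, hence $X\times Y$ is not Menger. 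If you wish to keep your format of an explicit $f\colon X\times Y\to\w^\w$, replace $g_{A,B}$ by the increasing enumeration of $A\oplus B$ (continuous on the set $\{\la A,B\ra: A\oplus B\in[\w]^\w\}$) and use the diagonal pairs to witness domination; your final sentence, that non-Mengerness of $X\times Y$ together with $Y$ being Menger yields the Observation, is fine.
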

\begin{proof}
Let $\oplus$ be the coordinate-wise addition modulo $2$ in $2^\w$, i.e., the standard
operation turning $2^\w$ into a topological group.
We shall show that $X\oplus Y$ is not a Menger subspace of $2^\w$.
Since no dominating subset of $[\w]^\w$
is Menger (see, e.g., \cite[Theorem~4.4]{COC2}) and $z_\alpha\leq^*
y_\alpha\setminus x_\alpha=y_\alpha\oplus x_\alpha$, it remains to 
check that $X\oplus Y\subset [\w]^\w$. This is done through a routine 
consideration of $x_\alpha\oplus y_\beta$ for all possible $\alpha,\beta$ below.

1.  $\alpha<\beta$. In this case $x_\alpha\setminus y_{\alpha+1}\subset^* x_\alpha\setminus y_\beta\subset x_\alpha\oplus y_\beta$,
and $x_\alpha\setminus y_{\alpha+1}$ is infinite by the choice of 
$x_\alpha.$

2. $\alpha=\beta$. Then $y_\alpha\setminus x_\alpha=x_\alpha\oplus y_\alpha$ is infinite by the choice of $x_\alpha$.

3. $\alpha>\beta$. In this case 
$x_\alpha\subset y_\alpha\subset^* y_\beta$ and $y_\alpha\setminus x_\alpha$ is infinite. Thus, $y_\beta\setminus x_\alpha$ is infinite as well, and the latter difference is included into $x_\alpha\oplus y_\beta$.
\end{proof}
\medskip 
 
\noindent \textbf{Acknowledgments.} The second author is 
grateful to members of the Ko\v{s}ice topology and logic group for
the possibility to present the results of this work in their seminar in Spring 2021 and many valuable comments during that presentation. 
We thank the referee for comments and suggestions.


\begin{thebibliography}{ChGP??}

\bibitem{Bla10}
Blass, A.,
\emph{Combinatorial cardinal characteristics of the continuum},
in: \textit{Handbook of Set Theory} (M.\ Foreman, A.\ Kanamori, and M.\ Magidor, eds.),
Springer, 2010, pp. 395--491.

\bibitem{Eng} Engelking, R., {\it General topology.}
  Second edition.  Sigma Series in Pure Mathematics,
 6. Heldermann Verlag, Berlin, 1989.

\bibitem{Hur27} Hurewicz, W.,
{\it \"Uber Folgen stetiger Funktionen,} Fund.  Math. \textbf{9}  (1927), 193--204.

\bibitem{COC2}  Just, W.; Miller, A.W.; Scheepers, M.; Szeptycki, P.J.,
{\it The combinatorics of open covers. II,}  Topology Appl.
\textbf{73}  (1996),   241--266.

\bibitem{Lav76}  Laver, R., {\it On the consistency of Borel's conjecture},
 Acta Math.  \textbf{137}  (1976),  151--169.

\bibitem{MilTsa10} Miller, A.W.; Tsaban, B.,
{\it Point-cofinite covers in the Laver model,}
 Proc. Amer. Math. Soc.  \textbf{138}  (2010),   3313--3321.

\bibitem{MilTsaZdo14}
Miller, A.W.; Tsaban, B.; Zdomskyy, L., 
{\it Selective covering properties of product spaces,} 
Ann. Pure Appl. Logic  \textbf{165} (2014), 1034--1057.

\bibitem{OreTsa11}
Orenshtein, T.; Tsaban, B.,  {\it Linear $\sigma$-additivity and some applications,}
 Trans. Amer. Math. Soc. \textbf{363} (2011),  3621--3637.

\bibitem{RepZdo17}
 Repov\v{s}, D.;   Zdomskyy, L.,  {\it Products of Hurewicz spaces in the Laver model,}
  Bull.  Symb. Log. \textbf{23} (2017), 324--333.

\bibitem{SchTal10} Scheepers, M.; Tall, F.,
{\it Lindel\"of indestructibility, topological games and selection principles},
  Fund. Math.  \textbf{210}  (2010),   1--46.

\bibitem{SzeTsa17}
 Szewczak, P.; Tsaban, B., {\it Products of Menger spaces: a combinatorial approach,}
  Ann. Pure Appl. Logic \textbf{168} (2017),  1--18.

\bibitem{Zdo05} Zdomskyy, L.,  {\it A semifilter approach to selection principles,}
Comment. Math.  Univ.  Carolin. \textbf{46} (2005), 525--539.

\bibitem{Zdo20}
Zdomskyy, L., {\it Selection principles in the Laver, Miller, and Sacks models,}
Contemp. Math., 755, 
American Mathematical Society, Providence, RI, 2020, 229--242.

\end{thebibliography}
\end{document}